\documentclass{elsarticle}
\usepackage{geometry}
\usepackage{bm}
\usepackage{amsmath}
\usepackage{amsthm}
\usepackage{amssymb}
\usepackage{tikz}
\usepackage{mathtools}

\usepackage{float}
\usepackage{tabularx}

\usepackage{caption,subcaption}
\usepackage{color}

\usepackage{tikz}

\newtheorem{lemma}{\textbf{Lemma } }
\newtheorem{definition}{\textbf{Definition} }

\newtheorem{remark}{\textbf{Remark} }

\newtheorem{conjecture}{\textbf{Conjecture} }
\begin{document}
	
	\title{
		Robust  Eigenvectors of  Regular  Simplex Tensors:    Conjecture Proof
		\tnoteref{mytitlenote}
	}
	\tnotetext[mytitlenote]{
		This work was partially supported by National Natural Science Fund of China (62271090), Chongqing Natural Science Fund (cstc2021jcyj-jqX0023), National Key R\&D Program of China (2021YFB3100800), CCF Hikvision Open Fund (CCF-HIKVISION OF 20210002), CAAI-Huawei MindSpore Open Fund, and Beijing Academy of Artificial Intelligence (BAAI).
	}
	
	\author[add1]{Lei  Wang\corref{coraut}}
	\cortext[coraut]{Corresponding author}
	\ead{wanglei179@mails.ucas.ac.cn}
	\author[add2,add3,add4]{Xiurui Geng}
	\author[add1]{Lei  Zhang}

	\address[add1]{	the School of Microelectronics and Communication
		Engineering, Chongqing University, Chongqing 400044, China}
	\address[add2]{Aerospace Information Research Institute, Chinese Academy of Sciences, Beijing 100094, China}
	\address[add3]{University of the Chinese Academy of Sciences, Beijing 100049, China} 
	\address[add4]{Key Laboratory of  Technology in Geo-Spatial Information Processing and Application System, Chinese Academy of Science, Beijing 100190, China}


\begin{abstract}
The  concept of   tensor  eigenpairs   has  received  more researches   in  past  decades. 
Recent works have   paid  attentions to   a  special class of  symmetric tensors  termed  regular simplex tensors,  which is  constructed by  
equiangular tight frame  of $n+1$ vectors in 
$n$-dimensional space,  and    the  robustness of eigenpairs was investigated. 
In  the end of the literature,  a  conjecture  was    claimed  that the robust eigenvectors of a regular simplex tensor  are precisely
 the
vectors in the frame.
One  later
work 
theoretically 
proved that 
the case of $n=2$ was true. 
In this paper, 
we  proceed further   
and  complete  the proof for  the above conjecture.
Some    promising directions 
are  discussed in the end   for    future works.

\end{abstract}

\begin{keyword}
	Regular simplex tensor\sep   eigenpairs\sep   robustness  analysis\sep   
	 local  optimality\sep constrained optimization.
\end{keyword}

\maketitle
\textbf{AMS subject classifications. 15A69,	90C26}


\section{Introduction}


Tensor   analysis and   applications  have been researched 
in recent years, and 
among them, 
the concept of tensor eigenpairs 
 has been widely exploited  in  theory \cite{SHOPM,NCM,OTD,Cuicf,hm}  and also  
made 
numerous  applications   in  many disciplines, such as latent  variable mode \cite{Hsu}, hyperspectral  image processing \cite{NPSA,MSDP}, 
signal processing \cite{tensor_bss1}  and so on.


However,  it  has  also  been   shown  that  most of tensor  problems  are NP hard  \cite{NP-hard},  including computing all  eigenpairs of tensors.
To our best knowledge,  
so far,  there are  only  two  algorithms that  can  obtain  all  eigenpairs of tensor \cite{Cuicf,hm}.
Therefore, most of  previous works aim
 to obtain the maximized or minimized one eigenpairs, and different optimization algorithms were developed, such as \cite{SHOPM,ASHOPM,NCM}.
 Among them, 
 one of the 
 classical algorithms
 is called  tensor power method (TPM),  which is based on a  fixed-point scheme. 
 The  fixed-points (may be more than one) of the  TPM 
 are the eigenvectors of tensor.
 An important  issue concerning the TPM  method 
 is to 
 investigate 
 that the   obtained  eigenvector  is  robust or not (See Subsection \ref{robustsection} for details).
Concerning  this problem, 
previous works have 
mainly  paid attentions to some   special  classes  of  symmetric  tensors,  
and  one   widely researched   type  
is termed 
  orthogonally  decomposable  (odeco) tensors \cite{odst,OTD,Hsu,sr1,cunmu,GloballyConvergent},  
  which 
  is  
  an natural generalization of orthogonal matrix decomposition.
A   good  property for odeco tensors
is  that  the  classical TPM  can  exactly 
extract 
the vectors 
that  is 
the generator of the odeco  tensor.
In  other words,
the robust eigenpairs of  such a type of tensor
can be found by 
TPM.


However, 
unfortunately, 
most of symmetric tensors cannot be 
orthogonally  decomposable. 
In  this sense, 
recently, 
some researchers further  extended 
the case of  odeco tensors into 
a  more generalized one, where the  symmetric  tensor  is  generated  by 
the set of some  equiangular  set (ES) or equiangular tight frame (ETF), which contains 
 of $r$ vectors in 
$n$-dimensional space \cite{RobustEigen}. 
For  example, 
the case of  $r=n$  serves as a  special one of tight frame,  which  
forms a  standard orthonormal  basis and  corresponds to 
the  odeco tensors.
When $r=n+1$,  the 
frame is termed the regular simplex one, 
and the  generated tensor is thus called regular simplex tensor. 
In  \cite{RobustEigen}, 
they  discussed that under what  condition 
the eigenvectors will be a robust one, 
and 
studied 
 these  special  types of  tensors    generated by ES or ETF with  some theoretical  proof. 
 
 Furthermore, 
in  the end of the literature \cite{RobustEigen},  a  conjecture  was    claimed  that the robust eigenvectors of a regular simplex tensor  are precisely
the
vectors in the frame (See  Conjecture   \ref{conjecturesimplex}   in Subsection \ref{conjecture} for formal statement).
Later, 
several researchers 
further 
focused  on  this conjecture.  
By reformulating  
the regular  simplex tensor eigenpairs equation 
as an algebraic   system of 
equations,
they 
  investigated 
the real eigen-structure 
 of  all eigenpairs  and tended to  identify which ones are robust\cite{teneigenstructure}.
However, 
they only provided the robustness proof for the case of  $n=2$,
and the experimental results  justification for  the case of  $n=3$.
For more bigger $n$ cases,  it 
will be a tough task for proof.
There are  mainly two difficulties concerning such a proof way:
1): as $n$   becomes larger, 
the number of all eigenpairs  exponentially increases, and 
determining  the spectral radius for all of them is computationally 
heavy; 
2):  the eigen-structure of all 
eigenpairs is also complicated, 
and  thus  it could be  tough   for most  of  eigenpairs to 
calculate  the Jacobian matrix and let alone 
 determine its spectral radius.
In this paper, in the basis of the only two works \cite{RobustEigen,teneigenstructure}
and to avoid these  difficulties,
we proceed further and provide a complete proof   concerning this  conjecture 
and 
the main contributions of this paper are 
concluded as  follows:

The  locally maximized eigenpairs of a tensor  is another important concept 
in the optimization theory.
The connection between robust 
and locally maximized eigenpairs  
was  firstly  investigated, and 
we theoretically show that 
robust eigenpairs is  subset of all  these  locally maximized ones. 
Such a  conclusion enables us to simplify the proof way adopted in \cite{teneigenstructure}.
In detail, one does not need to check 
the robustness of  all eigenpairs, and   
it will be enough to only  focus on these  locally maximized eigenpairs.
This is our main finding to deal with the robustness problem, 
and the proof for the unsolved conjecture was provided. 

%
%

The  rest of the paper
is organized as  follows.
In Section  \ref{pre},
Some preliminaries related to the subject are provided,  including 
the  definition  for  
tensor  eigenpairs  and  robust  
eigenpairs,  and  the  focused regular simplex tensor. 
In  Section  \ref{Relationonrobust},
the relationship 
between  the   robust    and locally maximized eigenpairs  
of  tensor 
was  investigated. 
 Then, in Section  \ref{NewCriterion},
 the proof  for the conjecture was  provided.
 Some future works are discussed in Section  \ref{futurework}.

\section{Preliminaries}\label{pre}
We    introduce  some  necessary   notations, definitions   and  lemma used  in   this  article. 
In  this  material,  as adopted in  many tensor-related works \cite{kolda,TensorPCA,9521829},  high-order tensors are denoted
in
boldface Euler script letters, e.g.,
$\mathcal  A $.
Matrices are denoted  in   boldface capital letters, e.g., $\mathbf  A $; vectors are denoted in  
boldface lowercase letters, e.g.,  $\mathbf  a $.
Sets and subsets are denoted  in blackboard bold  capital letters, e.g.,  $\mathbb  A $.

A  $d$th-order tensor is denoted 
$\mathcal A \in \mathbb {R}^{I_1 \times I_2  \times \dots \times I_{d} }$,
where 
$d$
is the order   of
$\mathcal A $, and 
$ I_j $ ($  j \in \{ 1,2,\dots,d \}$)  is  the  dimension  of  
$j$th-mode.
The element of $\mathcal A$,    which  is  indexed  by integer tuples $(i_1,i_2,\dots,i_d) $, is denoted 
($a_{i_1,i_2,\dots,i_d})_
{1 \le i_1 \le I_1, 
	\dots, 
	1 \le i_d \le I_d}
$. 
A   tensor is called    symmetric if its elements remain invariant under any permutation of 
the  indices\cite{kolda}. 
Let    $  T^{m}(\mathbb R^{n}) $ denotes   the  space  of  all  such  real  symmetric    tensors.
Given a $m$th-order  $n$-dimensional symmetric  tensor  $\mathcal S $ and  a  vector $ \mathbf u \in \mathbb {R}^{n \times 1}$, we have
$ 
\mathcal S \mathbf u^{m} =
\sum\limits_{i_1,i_2,\dots,i_m=1}^{n} 
s_{i_1,i_2,\dots,i_m}  u_{i_1} \dots   u_{i_{m}}
$, 
and  $   \mathcal S \mathbf u^{m-1}   $   denotes   a    $n$-dimensional
column   
vector,  whose  $j$th  element   is    
$
(\mathcal S \mathbf u^{m-1})_{j} =
\sum\limits_{i_2,\dots,i_m =1}^{n} 
s_{j,i_2,\dots,i_m}  u_{i_2} \dots   u_{i_{m}}
$\cite{Cuicf}.
Furthermore,   $   \mathcal S \mathbf u^{m-2}   $   is  an    $n  \times  n $  matrix,    whose  $(i, j )$th  element   is  
$
(\mathcal S \mathbf u^{m-2})_{i, j} =
\sum\limits_{i_3,\dots,i_m =1}^{n} 
s_{i,j,i_3,\dots,i_m}  u_{i_3} \dots   u_{i_{m}}.
$

$\mathbf  1_{n}$   denotes   a  $ n  \times  1$ column vector,  and  $\mathbf  I_{n}$   denotes   a  $ n  \times  n$  identity  matrix.

$\triangledown$ is  the  gradient  operator.
$ \rm null( \mathbf A) $ 
denotes the null space  of $\mathbf A$. 
$ \mathbf  P_{\mathbf  A }^{\bot} $ is  the  orthogonal   cpmplement  operator  of  $\mathbf A$.

\begin{definition}[\textbf{Spectral  radius}]\label{radiusdefinition}
	The  spectral  radius  of    a  matrix  $\mathbf A \in \mathbb {R}^{n \times n}$  is  the   maximum  value of  the  absolute  value  
	of  all    eigenvalues  of 
	$\mathbf A$,  denoted  by 
	$ \rho (\mathbf A) = max  \vert  \sigma_{i}(\mathbf A) \vert$,
	where 
	$ \sigma_{i}(\mathbf A)  ,  i=1,2,\dots, n $
	are   $n$ eigenvalues of 
	$\mathbf A$.
\end{definition}

\begin{definition}[\textbf{The  outer product}]
	\label{outerprod}
	Given   $m$  vectors 
	$ \mathbf a^{(i) } \in \mathbb {R}^{I_i \times 1}$ 
	($i=1,2, \dots, m$),
	their   outer  product   
	$ \mathbf a^{(1) }
	\circ
	\mathbf a^{(2) }
	\circ  \dots
	\circ
	\mathbf a^{(m) } 
	$  
	is   a    $ m$th-order  tensor denoted $   \mathcal A$,  with  a size  of  
	$ I_1 \times I_2  \times \dots \times I_m  $.  
	And  its    element is    the  product  of    the  corresponding  vectors'   elements, i.e., 
	$ 
	a_{i_1,i_2,\dots,i_m}
	= 
	\mathbf a^{(1) }_{i_1}
	\mathbf a^{(2) }_{i_2}
	\dots
	\mathbf a^{(d) }_{i_m} 
	.
	$
	When 
	$  \mathbf a^{(1) }
	=  
	\mathbf a^{(2) }
	=   \dots
	= 
	\mathbf a^{(m) }
	=\mathbf a $ $(I_1 =  I_2  = \dots = I_m =I)$,  we use  the  notation 
	$ \mathcal A =  \mathbf a^{\circ m}$  for  simplicity,  where 
	$   \mathcal A $ is  a  symmetric  tensor  of  order  $m$  and  dimension  $ I$.  
\end{definition}


\subsection{Optimization  theories of  Tensor eigenpairs}
In this part,  we briefly  introduce the  optimization theories   related to the tensor  eigenpairs problem. 
The  concept  of  tensor  eigenpairs  can be    understood  and  derived  by  considering  the  following   constrained  optimization  model:
\begin{equation}\label{opti_ori}
\begin{cases}
\max\limits_{\mathbf v} \quad \mathcal S \mathbf v^{m}   \\
\rm s.t. \quad \mathbf v^{\mathrm {T}}\mathbf v=1
\end{cases}.
\end{equation}
The Lagrangian function 
of (\ref{opti_ori})  is  defined as:
\begin{equation}\label{Lagrangian_function}
L(\mathbf v, \lambda)=
\frac {1}  {\it m}
\mathcal S \mathbf v^{\it m}
+
\frac { \lambda} {2} (1-  \mathbf v^{\mathrm {T}}\mathbf v).
\end{equation}
When the gradient of
$  L(\mathbf v, \lambda) $ to
$ \mathbf  v $ (also known as  ) is  $ \mathbf 0$,    the eigenpair of a  symmetric  tensor can  be  deduced,  which  was  independently    defined  by   Lim  and  Qi  in  2005:

\begin{definition} \cite{qi,lim} [\textbf{eigenpairs of  symmetric tensor}]
	Given a  tensor $\mathcal S   \in    T^{m}(\mathbb R^{n}) $,
	a pair
	$(\lambda ,\mathbf v )$
	is an  eigenpair  of  
	$\mathcal S  $ 
	if
	\begin{equation}\label{definition}
	\mathcal S \mathbf v^{m-1}=\lambda \mathbf v,
	\end{equation}
	where
	$ \lambda  \in  \mathbb C $
	is  the  eigenvalue and
	$ \mathbf v  \in   \mathbb C^{n \times  1} $
	is the  corresponding   eigenvector   satisifying 
	$\mathbf v^{\mathrm {T}}\mathbf v=1 $.
\end{definition}

The  second-order  derivation  information  plays  an  important  role  in  identifying    whether  a  stationary  point  is     locally    optimal    given    an  optimization   model.      The    second-order derivation  
of
$  L(\mathbf v, \lambda) $ 
to    $ \mathbf v $, which is  also  termed  the Hessian matrix of
(\ref{Lagrangian_function}),  is   denoted
by 
\begin{equation}\label{hessian_matrix}
\mathbf H(\mathbf v) = (m-1)\mathcal S \mathbf v^{m-2} - \lambda \mathbf I_{n} ,
\end{equation}
where
$ \mathbf I_{n} $
is an $ n  \times  n $  identity matrix.

Then,  
the 
locally    optimal   solutions of   (\ref{opti_ori})
can  be  identified  by
checking the  negative 
definiteness  
and determining the sign of each  eigenvalue
of the  following  matrix:
\begin{align}\label{Mhess}
\mathbf {K}
& =  \mathbf  P_{\mathbf  v}^{\bot} \mathbf H (\mathbf v)   \mathbf  P_{\mathbf  v}^{\bot}
\end{align}
and the  detailed  explanation for  the  
derivation of 
(\ref{Mhess})
can refer to  
our previous work \cite{localRST}.

	\subsection{Robust  eigenpairs of  symmetric tensors}\label{robustsection}

One of  the  widely used  algorithms  to  obtain  tensor   eigenpairs  is  called  the  tensor  power   method, which 
is  based on  the  following  mapping  function   
\begin{equation}
\phi(\mathbf{v})=\frac{\mathcal S  \mathbf{v}^{m-1}}{\left\|\mathcal S  \mathbf{v}^{m-1}\right\|}
\end{equation}
and  performs  the  following  iterative  schedule:	
\begin{equation}\label{tenpower}
\mathbf  {v}_{k+1} \mapsto \frac{\mathcal{S}  \mathbf{v}_{k}^{m-1}}{\left\|\mathcal{S}  \mathbf{v}_{k}^{m-1}\right\|}
.
\end{equation}
It can be observed 
by comparing  
(\ref{tenpower})
with the KKT condition (\ref{definition})
that the tensor power method 
can be understood as iterative method by 
using the  first-order KKT  equation. 

Given  a random   initialization  vector  
$ 	\mathbf  {v}_{0} $
and  iterate the above  formula 
(\ref{tenpower}) until  some  termination condition  is  satisfied,  an  eigenpair  can be  obtained. 
Then,  a   robust eigenvector of $ \mathcal{S} $  is an eigenvector  $  \mathbf v $ that is an attracting
fixed point of the tensor power method, 
which 
can be identified  by  calculating  the spectral radius of the Jacobian  matirx   of  the  map  function,  which  is   denoted  and  given  by 	
\begin{equation}\label{Jacobianmatirx} 
\mathbf{J}(\mathbf{v})=\frac{m-1}{\lambda}\left(\mathcal{S}  \mathbf{v}^{m-2}- \lambda  \mathbf{v} \mathbf{v}^{\top}\right),
\end{equation}
where $  \lambda \neq 0$, indicating that 
the robustness  checking should exclude these eigenpairs  with eigenvalues 0.
If the  spectral  radius  of  $  	\mathbf{J}(\mathbf{v}) $ is  less than  1,  it  is  said  that  the  corresponding  eigenpair  is  a  robust  one. 
The  detailed derivations  can  refer  to 
the  Lemmas 3.2 and 3.3  in \cite{RobustEigen}.

\begin{remark}
	It should be noted that
	the proof for the 
	the  Lemma 3.3  in \cite{RobustEigen}
	is   less  rigorous or 
	is correct but miss
	some necessary illustration for the final expression 
	of (\ref{Jacobianmatirx}).

	Strictly speaking,  according to the derivation presented in the proof of Theorem 3.3 in \cite{RobustEigen},
	one can only obtain 
	the Jacobian matrix  as  follows:
	\begin{equation}\label{Jacobianmatirxabs} 
	\mathbf{J}(\mathbf{v})=\frac{d-1}{\vert  \lambda \vert }\left(\mathcal{S}  \mathbf{v}^{d-2}- \lambda  \mathbf{v} \mathbf{v}^{\top}\right), 
	\end{equation}
	which is  used  in the  sequential work  that  discussed  the same problem \cite{teneigenstructure}.
	The minor difference lies in that
	one more absolute value operation 
	was imposed on  the numerator $  \lambda  $.
	Note that even though 
	such a difference 
	will have no impact on determining
	the spectral radius, 
	since  we still need to calculate the absolute value one more time when determining  the spectral radius, as can be seen in the  definition \ref{radiusdefinition}.
	In this sense, 
	the difference between 
	(\ref{Jacobianmatirx})
	and 
	(\ref{Jacobianmatirxabs})
	is trivial.

	Here, we still would like to 
	justify 
	that
	(\ref{Jacobianmatirx})
	is also correct  to directly use   the original 
	(\ref{Jacobianmatirx}).
	The reasons  are as  follows. 
	When  $  d $ is odd, both  
	$(\lambda,\mathbf v )$   and   $(-\lambda ,-\mathbf v )$   are    eigenpairs,  without  losing 	generality,  we     only  take   eigenpairs with   non-negative   eigenvalues;  when  $  d$ is even,    eigenpairs   appear  in  $ \pm$ pairs  with  the same  eigenvalue, we only select $\mathbf v$ satisfying $ \sum_{i}v_{i}>0$. 
	In  addition, 
	we can derive that 
	$ 
	\lambda=
	\mathcal{S}  \mathbf{v}^{m}
	=
	\sum_{i=1}^{n+1} ( \mathbf  {w}_{j}^{\mathrm T} \mathbf{w}_{i})^{ m} 
	>0$
	for even $m$.

\end{remark}

\subsection{Regular  simplex  frame  and  tensor}\label{conjecture}
In this part,  we  will  introduce  a  special class of symmetric   tensors,  which is termed   
regular  simplex  one. 
First,  the    definition of the  generalized  equiangular set  is  introduced as  follows:  
\begin{definition}[Definition 4.1  in Ref. \cite{RobustEigen}]
	An equiangular set (ES) is a collection of vectors $\mathbf{w}_{1}, \ldots, \mathbf{w}_{r} \in \mathbb{R}^{n}$ with $r \geq n$ if there exists $\alpha \in \mathbb{R}$ such that
	$$
	\alpha=
	\left|\left\langle\mathbf{w}_{i}, \mathbf{w}_{j}\right\rangle\right|, \forall i \neq j \quad \text { and } \quad\left\|\mathbf{w}_{i}\right\|=1, \forall i .
	$$
	Furthermore, an $\mathrm{ES}$ is  called  an equiangular tight frame (ETF)  if  
	\begin{equation}
	\mathbf{W} \mathbf{W}^{\mathrm T}=a \mathbf{I}, \quad \mathbf{W}:=\left(\mathbf{w}_{1},  \cdots,  \mathbf{w}_{r}\right) \in \mathbb{R}^{(n-1) \times r}
	\end{equation}
\end{definition}

For  example, 
when   $r=n$ and $a=1$, the  orthonormal bases
$\left\{\mathbf{w}_{1}, \ldots, \mathbf{w}_{n}\right\} \subset \mathbb{R}^{n \times n}$
forms  an  ETF,  where $\alpha =0$.
When    $r=n+1$ and $a=\frac {n+1}{n}$,   $\left\{\mathbf{w}_{1}, \ldots, \mathbf{w}_{n+1}\right\} \subset \mathbb{R}^{(n) \times  (n+1)}$, it  is   termed  
regular  simplex frames,    $ 
\alpha = - \frac{1}{n}$.
$\mathbf{w}_{i}, i=1,2 \dots, n+1$   are  called  the  vectors  in  the  frame. 
In  2D  space,  
the  regular simplex  frame  is  a   regular  triangle.

%

Then,  the  regular  simplex  tensor  is   one  deduced  by  the  the  regular  simplex   frame  with  the  following   form:
\begin{equation}\label{simplextensor}
\mathcal{S}:=\sum_{i=1}^{n+1} \mathbf{w}_{i}^{\circ m}
\end{equation}
where $\circ$ is the outer product  defined  in Definition \ref{outerprod}, 
and  	$\mathcal{S}
$ is  a   symmetric  tensor of order
$ m $
and dimension
$ n$.   
In  this  paper,  we   mainly  analyze  the  robust  eigenpairs  of  regular  simplex  tensor  and  try  to  deal  with  the  following  conjecture:  
\begin{conjecture}[Conjecture 4.7 in  Ref  \cite{RobustEigen} ] \label{conjecturesimplex}
	The robust eigenvectors of a regular simplex tensor   are precisely the
	vectors in the frame.
\end{conjecture}	

This  conjecture 
was originally  proposed in \cite{RobustEigen}.
Later, several researchers  also  considered this problem  in \cite{teneigenstructure}. 
It should be noted that the above conjecture contains 
the following meanings:
1)   the  robust eigenpairs of   regular simplex tensors  may not exist, which has been observed by experiments in  \cite{RobustEigen}, 
and will also be theoretically justified in the later  analysis;
2) if exists, they are only these 	vectors in the frame.
In \cite{RobustEigen}, the authors only shows that 
these vectors in the frame are indeed the robust one 
for the tensor  with dimension $n$ and order $m$ and $n \ge 2$,  $m \ge 3$ and $ n+m \ge 7$. (See Theorem 4.5 of \cite{RobustEigen} for details).
However, 
the uniqueness that these vectors in the frame are only robust ones is not justified, which was then left as the above Conjecture
\ref{conjecturesimplex}. 
In other words, the term "precisely" in the conjecture remains to be demonstrated. 
For this purpose, 
one must check  the robustness of  all  eigenvectors given  a regular simplex tensor,
and identify which are  robust ones  among all  candidates.  
And such a strategy  has been adopted  in the later work  \cite{teneigenstructure}.
In detail, they first analyzed   the real eigenstructure of  all eigenpairs   concerning   regular simplex tensor, and 
determine 
whether each eigenpair is robust or not  by  checking the spectral radius of the Jacobian matrix at each one. 
However, it can be seen that such a task is computationally heavy as $n$ and $m$ become larger, since the number of all eigenpairs 
will exponentially increase.  
So, they   only provided   the proof for the simplest case $n=2$, 
while the  higher $n$ cases will be difficult to be calculated and checked. 
In addition, 
it is also difficult 
for   most of eigenpairs 
to 
implicitly calculate the Jacobian 
matrix, let alone 
determine the spectral radius further.

To  our  best  knowledge,  so far, 
these  references are the only two works 
focusing on  identifying  the  robust  eigenpairs of regular  simplex  tensor. 
In this paper,  in the basis of the two works,  
we proceed further concerning the above  conjecture and 
finally complete the proof for the above  conjecture. 
The details are as follows. 
%

 \section{Relation on   robust and locally maximized eigenpairs }\label{Relationonrobust}


Note  that  in  the  related works \cite{RobustEigen,teneigenstructure},
the  robustness of  an  eigenpair  is  generally  checked  by the  spectral  radius of the Jacobian  matirx as  defined  in 
Subsection   \ref{robustsection}.
However, as mentioned above, 
it is a little complicated 
to determine the spectral radius of all eigenpairs, especially for the higher $n$ and $m$    case.

In this part,  we  would  like  to  provide  an  auxiliary   criterion   for  robustness  checking. 
It  can  be  observed   by   comparing    (\ref{Jacobianmatirx})  with   (\ref{hessian_matrix})  
that  both  of  them   contain   a  term  $\mathcal{S}  \mathbf{v}^{m-2}$,  and  
by  further   investigating  their  relationship,  the  following    lemma  can  be  built:     
\begin{lemma}\label{RobustLocal}
	Given  an  eigenpair   
	$(\lambda ,\mathbf v )$  of   a  tensor $\mathcal S   \in    T^{m}(\mathbb R^{n}) $, 
	if  the eigenpair  is  a  robust  one,  
	it must be  locally  maximized    solution  of  model (\ref{opti_ori}).
	Conversely, it does not always hold. 	
\end{lemma}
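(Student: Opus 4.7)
The plan is to read off the implication from a direct algebraic identity linking the Jacobian matrix $\mathbf{J}(\mathbf{v})$ of the tensor power iteration and the projected Hessian $\mathbf{K}$ appearing in the second-order optimality condition. The two matrices share the symmetric factor $\mathcal{S}\mathbf{v}^{m-2}$, and the eigenpair equation $\mathcal{S}\mathbf{v}^{m-1}=\lambda\mathbf{v}$ says precisely that $\mathbf{v}$ is an eigenvector of this symmetric factor with eigenvalue $\lambda$. I would begin by verifying that both $\mathbf{J}(\mathbf{v})$ and $\mathbf{K}$ annihilate $\mathbf{v}$, and by invoking the symmetry of $\mathcal{S}\mathbf{v}^{m-2}$ to conclude that the tangent space $\mathbf{v}^{\bot}$ is an invariant subspace for both operators, so that the entire spectral comparison can be reduced to $\mathbf{v}^{\bot}$.

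Restricting to $\mathbf{v}^{\bot}$ and using $\mathbf{v}\mathbf{v}^{\top}\mathbf{u}=\mathbf{0}$ for $\mathbf{u}\in\mathbf{v}^{\bot}$, a short calculation yields the key identity
\[
\mathbf{K}\big|_{\mathbf{v}^{\bot}} \;=\; \lambda\bigl(\mathbf{J}(\mathbf{v})\big|_{\mathbf{v}^{\bot}} - \mathbf{I}\bigr),
\]
so every eigenvalue $\mu$ of $\mathbf{J}(\mathbf{v})$ on $\mathbf{v}^{\bot}$ corresponds to the eigenvalue $\lambda(\mu-1)$ of $\mathbf{K}$ with the same eigenvector. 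Under the convention $\lambda>0$ (which holds at any local maximizer of (\ref{opti_ori}) since $\lambda=\mathcal{S}\mathbf{v}^{m}$, and is also the sign normalization adopted throughout the paper per the Remark), robustness $\rho(\mathbf{J}(\mathbf{v}))<1$ forces $\mu\in(-1,1)$ and hence $\lambda(\mu-1)<0$ for every such $\mu$; combined with $\mathbf{K}\mathbf{v}=\mathbf{0}$ this is exactly the second-order sufficient condition for $\mathbf{v}$ to be a local maximizer.

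For the non-converse assertion the same identity makes the failure transparent: negative semi-definiteness of $\mathbf{K}$ only requires $\mu\le 1$ and places no lower bound on $\mu$, so any eigenpair whose Jacobian carries an eigenvalue in $(-\infty,-1]$ on $\mathbf{v}^{\bot}$ is a local maximum that is not a robust fixed point. I would close this half with a small explicit example, for instance a non-frame eigenpair of a regular simplex tensor whose Jacobian spectrum has a component of modulus at least $1$. The step I expect to be the main obstacle is not the identity itself but the careful sign bookkeeping around $\lambda$ and the justification that the convention $\lambda>0$ may be imposed without loss of generality; once that point is pinned down, the whole lemma reduces to a one-line spectral correspondence on an $(n-1)$-dimensional subspace.
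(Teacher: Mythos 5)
Your proposal is correct and follows essentially the same route as the paper: the identity $\mathbf{K}\big|_{\mathbf v^{\bot}}=\lambda\bigl(\mathbf J(\mathbf v)\big|_{\mathbf v^{\bot}}-\mathbf I\bigr)$ is exactly the paper's relation $\lambda\mathbf J=\mathbf K+\lambda(\mathbf I_n-\mathbf v\mathbf v^{\mathrm T})$ read on the invariant subspace $\mathbf v^{\bot}$, and the spectral correspondence $\sigma^{\mathbf K}=\lambda(\sigma^{\mathbf J}-1)$ together with the sign convention $\lambda>0$ is precisely how the paper deduces that robustness forces negative definiteness of $\mathbf K$ on the tangent space while the converse can fail when some $\sigma^{\mathbf J}\le-1$. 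Your phrasing via restriction to the invariant subspace is a slightly cleaner packaging than the paper's case-by-case simultaneous-diagonalization argument, and you rightly flag the $\lambda>0$ normalization as the point needing care, but the underlying argument is the same.
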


\begin{proof}
	First,  
	$ 	\mathbf {K} $   in  (\ref{Mhess})  is  rewritten  as  the  following  form:
	\begin{align}
	\mathbf {K}
	&   	  	\nonumber 
	=  \mathbf  P_{\mathbf  v}^{\bot} \mathbf H (\mathbf v) 
	\mathbf  P_{\mathbf  v}^{\bot} 
	\\  	\nonumber 
	&=
	(\mathbf  I_{n}  - 
	\mathbf v\mathbf  v^{\mathrm T} )
	[ (m-1)\mathcal S \mathbf v^{m-2} - \lambda \mathbf I_{n}]
	(\mathbf  I_{n}  - 
	\mathbf v  \mathbf v^{\mathrm T} )
	\\  	\nonumber 
	&=
	(\mathbf  I_{n}  - 
	\mathbf v\mathbf v^{\mathrm T} )
	[
	(m-1)\mathcal S \mathbf v^{m-2} 
	-
	\lambda \mathbf I_{n}
	-
	(m-2)\mathbf v\mathbf v^{\mathrm T}  
	]
	\\  	\nonumber 	
	&=
	(m-1)\mathcal S \mathbf v^{m-2} 
	-
	\lambda \mathbf I_{n}
	-
	(m-2)\mathbf v\mathbf v^{\mathrm T}  
	-
	(m-1)\mathcal S \mathbf v^{m-2}  	\mathbf v\mathbf v^{\mathrm T}
	+
	\lambda	\mathbf v\mathbf v^{\mathrm T}
	+
	(m-2)	\mathbf v\mathbf v^{\mathrm T}
	\\    	 	\nonumber 
	&
	=
	(m-1)\mathcal S \mathbf v^{m-2} 
	-
	\lambda \mathbf I_{n}
	-
	(m-1)\lambda \mathbf v \mathbf v^{\mathrm T}
	+
	\lambda	\mathbf v\mathbf v^{\mathrm T}  ,	
	\end{align}
	where in the last  equation, we utilize 
	$ \mathcal S \mathbf v^{m-2}  	\mathbf v 
	= \mathcal S \mathbf v^{m-1}  	
	=
	\lambda		\mathbf v$.	
	By   comparing  with   (\ref{Jacobianmatirx}), it  holds  that  	
	\begin{align}\label{lmdJMI}
	\lambda	\mathbf J =   	{\mathbf K }
	+\lambda  (\mathbf  I_{n}  - 
	\mathbf v \mathbf v^{\mathrm T} ) .
	\end{align}
	
	Furthermore,  it can be checked that 
	\begin{equation}\label{Jacobianmatirxvec} 
	\mathbf{J} \mathbf{v}
	=
	\frac{m-1}{\lambda}
	(
	\mathcal{S}  \mathbf{v}^{m-2}\mathbf{v} 
	- \lambda  \mathbf{v} \mathbf{v}^{\mathrm T} \mathbf{v})
	=
	\frac{m-1}{\lambda}
	(
	\lambda  \mathbf{v} 
	- \lambda  \mathbf{v})
	=
	0 \cdot 
	\mathbf{v},
	\end{equation}
		where   we  reuse 
	$ \mathcal S \mathbf v^{m-2}  	\mathbf v 
	= \mathcal S \mathbf v^{m-1}  	
	=
	\lambda		\mathbf v$, and the above result 	 means 
	that
	$  	\mathbf{v}$
	is an eigenvector of 
	$ \mathbf{J} $ with eigenvalue 0,
	and in this way, the other eigenvectors must lie in the 
	null  space of 	$  	\mathbf{v}$, denoted by 
	$null(\mathbf{v})$.

	In  a  	similar way,  
	the  vector  $\mathbf v$ 
	will  simultaneously 
	be the  eigenvector  for  
	$ \mathbf K,  \mathbf  I_{n}  - 
	\mathbf v  \mathbf v^{\mathrm T}$, 
	which  holds
	that 
	\begin{equation}\label{Kmatirxeig} 
	\mathbf{K} \mathbf{v} =
	0 \cdot 
	\mathbf{v}, 
	\quad 
	(\mathbf  I_{n}  - 
	\mathbf v  \mathbf v^{\mathrm T})\mathbf{v} =
	0 \cdot 
	\mathbf{v}, 
	\end{equation} 	
	and  the  other  eigenvectors    
	of 	$ \mathbf K,  \mathbf  I_{n}  - 
	\mathbf v  \mathbf v^{\mathrm T}$  also  must  lie  in   		$null(\mathbf{v})$.
	
	This  indicates that  their  eigendecomposition 
	shares   the  same  eigenvector  matrix,  denoted by 
	$\mathbf V
	=
	[\mathbf v,   		null(\mathbf{v})] \in 
	\mathbb R^{ n  \times  n} $.
	In  other word, concerning  (\ref{lmdJMI}),  
	they can be  diagonalized  
	by the same  orthogonal matrix  	$\mathbf V$,  which  follows:
	\begin{align}\label{lmdJMIVdiag}
	\lambda  \mathbf V^{\mathrm T}   
	\mathbf J  
	\mathbf V=   
	\mathbf V^{\mathrm T}
	[ 	{\mathbf K }
	+\lambda  (\mathbf  I_{n}  - 
	\mathbf v \mathbf v^{\mathrm T} )
	]
	\mathbf V.
	\end{align}
	Note  that  
	$  
	\mathbf  I_{n}  - 
	\mathbf v  \mathbf v^{\mathrm T}
	\in 
	\mathbb R^{ n  \times  n}
	$ 
	is  a  projection  matrix  with  rank  $n-1$, and  its  eigenvalues are given by 
	$ 1, 1,   \dots,  1,  0    $,  where the  number  of  the eigenvalue  of  $ 1 $  is  $ n-1 $.  
	
	Based on the  above relationship (\ref{lmdJMIVdiag}), once 
	we obtain the  eigen-distribution of 
	$ \mathbf K $,
	the corresponding eigenvalues of 
	$ \mathbf J$
	and its spectral radius can be determined. 
	Therefore, in the following, we will discuss that under what condition, 
	the eigenpair may be a robust one. 
	
	Assume that 
	the 
	$ n $  eigenvalues of  $\mathbf J$
	and  $\mathbf K$
	is sorted  as 
	\begin{align}
	\sigma_{1}^{\mathbf J}   \le   \sigma_{2}^{\mathbf J}   \le    \dots  \le \sigma_{n}^{\mathbf J} .
	\\
	\sigma_{1}^{\mathbf K}   \le   \sigma_{2}^{\mathbf K}   \le    \dots  \le \sigma_{n}^{\mathbf K} .
	\end{align}

	  Then, we can have that 
	  	\begin{align}\label{lmdrelationequ}
	  	\lambda	 
	  	\sigma_{i}^ {\mathbf J} 	
	  	=
	  	\begin{cases}
	  		\sigma_{i}^ {\mathbf K} 
	  	+		\lambda    	   ,
	  	\quad     \sigma_{i}^{\mathbf  I_{n}  - 
	  		\mathbf v \mathbf v^{\mathrm T}} =1 
	  	\\
	  	\sigma_{i}^ {\mathbf K}    =0 	  , 
	  	 \quad     \sigma_{i}^{\mathbf  I_{n}  - 
	  		\mathbf v \mathbf v^{\mathrm T}} =0 
	  	\end{cases}.
	  	\end{align}

	
	According to the sign of all  eigenvalues of 	$  	\mathbf K  $, 
	three different cases are discussed as  follows: 
	
	\textbf{Case 1}:
	If  	
	$(\lambda ,\mathbf v )$  is    a   locally  minimized     solution  of  model (\ref{opti_ori}), 
	its  corresponding   matrix 
	$  	\mathbf K  $ 
	will be  a  non-negative  definite one.
	By combining 
	(\ref{Kmatirxeig}), 
	it can be concluded that 
	$   \sigma_{min}^{\mathbf K}=	\sigma_{1}^{\mathbf K}  =0 $,
	and 
	$ 0 \le   \sigma_{2}^{\mathbf K}   \le    \dots  \le \sigma_{n-1}^{\mathbf K}  $.
	By  combining  (\ref{lmdJMIVdiag}), it can be  concluded that 
	\begin{align}\label{lmdrelationcase1}
	\sigma_{min}
^{\mathbf J } =	\sigma_{1}
	^{\mathbf J }=0,
	\quad  
	\lambda 	\sigma_{i}
	^{\mathbf J }	
	=    	
	\sigma_{i} 	^{\mathbf K}	
	+	\lambda  
	\ge 
	\lambda  	,  
	\quad  i= 2,\dots, n. 
	\end{align}
	which  indicates 
	all eigenvalues of 
	$ \mathbf J $
	are non-negative, and 
	it can be derived that 
$ 	\rho (\mathbf J)
	=
		\sigma_{n}
	^{\mathbf J }
\ge 
	1 $, 
	which indicate that the corresponding  eigenpair  cannot be a  robust one.
	
	\textbf{Case 2}:
	If  	
	$(\lambda ,\mathbf v )$  is    a   saddle   solution  of  model (\ref{opti_ori}), 
	its  corresponding   matrix 
	$  	\mathbf K  $ 
	will be  uncertain,  indicating 
	its eigenvalues include negative, positive, and 0.
	Assuming that 
	$k$-th eigenvalue are equal to 0, which follows:
	\begin{align}
	\sigma_{1}^{\mathbf K}   \le  \dots  \le   \sigma_{k-1}^{\mathbf K}  \le  \sigma_{k}^{\mathbf K} =0   \le     \sigma_{k+1}^{\mathbf K}  \dots  \le \sigma_{n-1}^{\mathbf K} .
	\end{align}
	Since 
	(\ref{Kmatirxeig})  shows  that 
	$\mathbf J$ 
	is always with one eigenvalue 0, 
	based on   (\ref{lmdJMIVdiag}), 
	it  indicates  that
	$ \sigma_{k}^{\mathbf J} =0 $, 
	and 
	$ \sigma_{k}^{\mathbf  I_{n}  - 
		\mathbf v \mathbf v^{\mathrm T}} =0 $. 
This further means that except for the $k$-th eigenvalues,
the other ones  of 
	$\lambda  (\mathbf  I_{n-1}  - 
	\mathbf v \mathbf v^{\mathrm T})$ are all 
	$    \lambda   $,
Then, we have that 
	\begin{align}\label{lmdrelationcase2}
		\lambda    \sigma_{i}
	^{\mathbf J }	
	&=    	
	\sigma_{i} 	^{\mathbf K}	
	+	\lambda  
	\le 
	\lambda  	,  
	\quad  i= 1,\dots, k-1. 
	\\ 
	\lambda    \sigma_{i}
	^{\mathbf J }	
	&=    	
	\sigma_{i} 	^{\mathbf K}	
	+	\lambda  
	\ge 
	\lambda  	,  
	\quad  i= k+1,\dots, n-1. 
	\end{align}

It can be seen that for 	$ i=  k+1,\dots, n-1 $, the corresponding 
eigenvalues  	$ \lambda    \sigma_{i}
^{\mathbf J } $	have definitely larger than $\lambda$. 
In this sense, 
no matter  	the range concerning  $\lambda    \sigma_{i}
^{\mathbf J }$ for 
	$  i=1,\dots, k-1 $, 
it  always holds that 
$
	\rho (\mathbf J)
	>1$
	and the   corresponding  eigenpair  cannot be a  robust one.

	\textbf{Case 3}:
	If  	
	$(\lambda ,\mathbf v )$  is    a   locally  maximized    solution  of  model (\ref{opti_ori}), 
	its  corresponding   matrix 
	$  	\mathbf K  $ 
	will be  a  negative  definite one,  indicating 
	$   \sigma_{max}^{\mathbf K}=	\sigma_{n}^{\mathbf K}  =   0 $
	and 
	$    \sigma_{1}^{\mathbf K}   \le    \dots  \le \sigma_{n-1}^{\mathbf K} \le 0 $.
	By  combining  (\ref{lmdJMIVdiag}), it can be  concluded that 
	\begin{align}\label{lmdrelationmax}
	\sigma_{max}
^{\mathbf J } =	\sigma_{n-1}
^{\mathbf J }=0,
\quad  
\lambda 	\sigma_{i}
^{\mathbf J }	
=    	
\sigma_{i} 	^{\mathbf K}	
+	\lambda  
\le 
\lambda  	,  
\quad  i= 1,\dots, n-2.  
	\end{align}
In this case, 
it can be observed that 
all eigenvalues have been bounded 
which is less than $ \lambda$. 
Furthermore, if \begin{align}\label{add}
 \lambda 	\sigma_{i}
^{\mathbf J }	
=    	
\sigma_{i} 	^{\mathbf K}	
+	\lambda  
>  
- \lambda  	,  
 i= 1,\dots, n-2.  
 \
 \end{align} 
and  (\ref{lmdrelationmax}) cannot 
hold for the strict equality, we can conclude that 
 all eigenvalues 
 lie in the range 
 $(-\lambda, \lambda)$,
	which  indicates that
$ 	\rho (\mathbf J)
<
1 $, 
	
	The above three cases discussion 
	indicated that 
	if  only  
and  	if  		$(\lambda ,\mathbf v )$  is    a   locally  maximized    solution  of  model (\ref{opti_ori}), 
with additional  condition as shown in (\ref{add}), 
it could be a robust one. 
While  
the locally minimized and saddle solutions must not be robust. 

	
	Conversely, if  an eigenpair is a robust one
	with 	 	 $
\rho  (\mathbf J ) 	
	< 1.
	$,
	This 
	equivalently indicates that 
	all eigenvalues are bounded  
	in the range of 
	$(-\lambda, \lambda)$. 
Similarly  	by     (\ref{lmdrelationequ}), we have that 
	$	\sigma_{i} 	^{\mathbf K} < 0$, which means that 	
		$(\lambda ,\mathbf v )$  is    a   locally  maximized    solution  of  model (\ref{opti_ori}).
			And  the proof is complete.
\end{proof}
	
	To  conclude, 
	if an eigenpair is a robust one, 
	it must be a locally  maximized   eigenpair  of the corresponding  constrained model  (\ref{opti_ori}).
	Conversely, it does not always hold.
	Therefore, the  set of all robust eigenpairs 
	is a subset of these locally maximized eigenpairs,
	which will be  always  no large than that of all eigenpairs. 
	In other words, 
	only these locally maximized eigenpairs could be the potential 
	candidates 
	to be  the  robust eigenpairs. 
	
	In this way, 
	given a  tensor (here, it is not constrained to be a  regular simplex tesnor since the above lemma is established for any symmetric tensors), 
	we do not need to analyze and determine the spectral radius 
	of $\mathbf J$   at  all eigenpairs, as adopted in the previous  work.  
	In contrast, 
	it suffices to 
	only focus on these locally maximized 
	eigenpairs  of  tensor.

	In this sense, 
	such a lemma 
	will bring about two advantages concerning robust eigenpairs identification for the Conjecture \ref{conjecturesimplex}:
	
	1) the number of eigenpairs to be  checked 
	is  greatly  reduced, due to the fact that 
	the number of   these locally maximized 
	eigenpairs is always no larger  than that of 
	all eigenpairs,  especially when $n$ becomes  larger. 
	
	2):
	naturally,  following the first merit, the difficulties   existed in the previous work 
	that   
	calculating the Jacobian matrix and  determining the spectral radius  for most of eigenpairs
	can be avoided, 
	since here we only focus on these  locally maximized 
	eigenpairs (these vectors in the frame), which is easy to be calculated, as can be seen  from 
	the following part.

  	Therefore, 
  	the following aim   turns  to
  	identifying   which are 
  	locally maximized solutions, concerning regular simplex tensors, which could be another important issue. 
  	Such an  
  	issue has been systematically investigated in our previous work\cite{localRST}  and 
  	we have
  	theoretically proved  that 
  	these locally maximized solutions of  a regular simplex  tensor
  	are 
  	only 
  	these vectors in the  regular simplex frame.
  	Please refer to Lemma 1 to 5   in   \cite{localRST} for details.

  	In this sense, we have only 
  	check the robustness of these vectors in the frame. 
We finish the proof for the Conjecture
\ref{conjecturesimplex}
in the next section.

 \section{Proof of the Conjecture \ref{conjecturesimplex}}\label{NewCriterion}
%
 
 \setcounter{conjecture}{0} 
\begin{conjecture}[Conjecture 4.7 in  Ref  \cite{RobustEigen} ] \label{conjecturesimplex}
	The robust eigenvectors of a regular simplex tensor   are precisely the
	vectors in the frame.
\end{conjecture} 

\begin{proof} 
Concerning  the   vectors   in the regular simplex  frame
$
\mathbf W =
[
\mathbf{w}_{1}, \ldots, \mathbf{w}_{n}
] \in
\mathbb{R}^{(n-1) \times  n}$, 
it holds that 
$ \mathbf{w}_{i}^{\mathrm T} \mathbf{w}_{i} =1, 
\mathbf{w}_{i}^{\mathrm T} \mathbf{w}_{j} = \alpha = -\frac{1}{n} $
for 
$ i \neq j$. 
$	\mathbf{W} \mathbf{W}^{\mathrm T}=
\sum_{i=1}^{n} 
\mathbf{w}_{i}
\mathbf{w}_{i}^{\mathrm T}
=
\frac{n+1}{n} \mathbf{I}$. 
It can be calculated that  when 
$ \mathbf v = \mathbf w_{j}$,
it holds that 

\begin{align}\label{swj}
\mathcal{S}  \mathbf v^{m-2}    
& =
\mathcal{S}  \mathbf w_{j}^{m-2}  
=
(
\sum_{i=1}^{n+1} \mathbf{w}_{i}^{\circ m}
)  \mathbf w_{j}^{m-2} 
=
\sum_{i=1}^{n+1} (  \mathbf w_{j}^{\mathrm T} \mathbf{w}_{i})^{ m-2} 
\mathbf{w}_{i}
\mathbf{w}_{i}^{\mathrm T}
\nonumber   \\
&= 
\mathbf{w}_{j}
\mathbf{w}_{j}^{\mathrm T}
+
\frac
{  1}
{ (-n)^{ m-2}  } 
\sum_{i=1, i \neq j}^{n+1} 
\mathbf{w}_{i}
\mathbf{w}_{i}^{\mathrm T}
=
\mathbf{w}_{j}
\mathbf{w}_{j}^{\mathrm T}
+
\frac
{  1}
{ (-n)^{ m-2}  }   
(
\frac{n+1}{n} \mathbf{I}
-
\mathbf{w}_{j}
\mathbf{w}_{j}^{\mathrm T}
)
\nonumber   \\
&=
(
1
-
\frac
{  1}
{ (-n)^{ m-2}  }  
)
\mathbf{w}_{j}
\mathbf{w}_{j}^{\mathrm T}
+
\frac
{ n+1}
{ (-n)^{ m-2} n }   \mathbf{I}
.
\end{align}

In  addition, 
the  corresponding  eigenvalue can be calculated by 
\begin{equation}\label{lmdwj}
\lambda
=
\mathcal{S}  \mathbf  {w}_{j}^{m}    
=
\sum_{i=1}^{n+1} ( \mathbf  {w}_{j}^{\mathrm T} \mathbf{w}_{i})^{ m} 
=
1+
\frac
{  n}
{ (-n)^{ m}  }   
.
\end{equation}

Then,  according to  (\ref{Jacobianmatirx}), 
 the Jacobian matrix  
at the eigenvector  $ \mathbf  {w}_{j} $
will satisfy 
\begin{equation}
 \mathbf  J (\mathbf  {w}_{j} )
 \mathbf  {w}_{j}
 =
 \frac{m-1}{\lambda}
(
\mathcal{S}  \mathbf  {w}_{j}^{m-2}- \lambda  \mathbf  {w}_{j} \mathbf  {w}_{j}^{ \mathrm T }
)
 \mathbf  {w}_{j}
 =
 0
 \cdot
  \mathbf  {w}_{j}
\end{equation}
	which means 
that
$  {\mathbf  {w}_{j} }$ itself  
is an eigenvector of 
$  \mathbf  J (\mathbf  {w}_{j} )  $ with eigenvalue 0,
and naturally, the other eigenvectors must lie in the 
null  space of 	$  	\mathbf {w}_j$, denoted by 
$null(\mathbf  {w}_{j}  )$.
By further 
utilizing  (\ref{swj})  and (\ref{lmdwj}),
it can be easily checked that 
there are only two different eigenvalues concerning 
$   \mathbf  J (\mathbf  {w}_{j} )$,
which are   
\begin{equation}
0, 
\underbrace{
\frac
{ (n+1)  (m-1) }
{ 
	1+
	(-n)^{ m-2} n }  , 
\dots, 
\frac
{ (n+1)  (m-1) }
{ 
	1+
	(-n)^{ m-2} n }
}_{n-1}
. 
\end{equation}

By considering 
the odd and even cases for $m$ separately, 
it can be concluded that 
the  spectral radius of 
$   \mathbf  J (\mathbf  {w}_{j} )$
is given by 
\begin{align}\label{radiuscase}
\rho (\mathbf J (\mathbf  {w}_{j})) 
=
\begin{cases}
\frac
{ (n+1)  (m-1) }
{ n^{ m-1}-1 }    ,
 \quad   m = 2k+1, k \in \mathbb N
  \\
\frac
{ (n+1)  (m-1)}
{1+n^{ m-1}  }    , 
, \quad  m = 2k, k \in \mathbb N
\end{cases},
\end{align}
where 
for the odd $m$ cases, 
since $
	1+
(-n)^{ m-2} n
=	1-
n^{ m-1}<0$, 
the numerator 
should be  changed to be positive.  

Note that  
$n \ge 2$,  $m \ge 3$, 
it can be checked that
$n + m \ge 7$, 
it holds that 
$ \rho (\mathbf J (\mathbf  {w}_{j}))  < 1 $, 
and the 
vectors in the frame   of the  corresponding 
 regular  simplex  tensor  are  robust.
 
\end{proof}

\begin{remark}
It can be seen  from 
(\ref{swj})  and (\ref{lmdwj}) that when	$ \mathbf v = \mathbf w_{j}$,
we can 
explicitly  obtain the result of 
$ \mathcal{S}  \mathbf v^{m-2}  $
and  the  corresponding  eigenvalue  $\lambda$, 
and then we can  easily  determine the eigenvalues distribution
of $ \mathbf J (\mathbf  {w}_{j})$.
However,  for the other eigenpairs $ 
\mathbf v$, it will be difficult to 
determine the result of 
$ 
\mathbf v^{\mathrm T} \mathbf{w}_{i}$
in (\ref{swj})
and  that of $\lambda$, 
thus 
deriving  an explicit form 
concerning 
$ \mathbf J (\mathbf  {w}_{j})$.
This is the main difficulties for the strategy to check all eigenapirs   as  adopted  in the previous  work.
Therefore, the previous work only check the case of $n=2$, which could be the simplest situation. 
When $n$  becomes larger, it will be a complicated task.

	Note that 
(\ref{radiuscase})
has also been  derived 
in Theorem 4.5 
in \cite{RobustEigen}.
See the first formula in its  proof for details. 
Therefore, 
the sequential 
analysis 
after deriving  (\ref{radiuscase})
is the same as that for the proof of Theorem 4.5 
in \cite{RobustEigen},
and thus we omit these details in the above. 
However, 
the differences
lie in that the result 
presented in Theorem 4.5
is only a  upper-bounded result,
while here, we 
accurately 
determine  the spectral radius 
by analyzing its eigenvalues distributions. 
In other words, 
we further show that 
the spectral radius can reach at the bound in Theorem 4.5.

The above proof also confirms that 
 the  robust eigenpairs of   regular simplex tensors  do not exist
 for the case of $(m,n)=(2,3)$ and  $(m,n)=(3,3)$. This is also consistent with the experimental results 
 presented in Table  I
 in Ref 
 \cite{RobustEigen}.



\end{remark}
\section{Conclusion and  Future Work}\label{futurework}
In this paper, 
we mainly focus on a conjecture concerning the robust eigenpair of regular simplex tensor, 
and 
provide a proof for it. 
Different from the previous works, 
our proof  way first investigated  the relationship between 
robust eigenpairs and  locally maximized eigenpairs, 
which is another important concept in optimization field.
Then, such a established  relation
 will help us   to greatly simplify 
 the proof  progress of the focused conjecture  by only considering 
 a smaller subset of all  eigenpairs. 
 Benefiting from such a processing, 
 the conclusion in the conjecture is finally
 demonstrated.

 The other conjectures  claimed in the original reference \cite{RobustEigen}
are also worth investigating  in the next  stage. 
One can refer to Conjecture 4.8 and 5.2, Problem 5.1 for detailed  contents.

\bibliographystyle{unsrt}
\bibliography{simplexref}

\end{document}